\documentclass[12pt,a4paper]{amsart}
\usepackage{amsfonts}%
\usepackage{amsmath}
\usepackage{graphicx} 

\usepackage{amssymb}
\numberwithin{equation}{section}
\setcounter{MaxMatrixCols}{10}

\newtheorem{theorem}{Theorem.}[section]

\newtheorem{lemma}[theorem]{Lemma}

\begin{document}

\title[]{Permutation-Invariance in Koml\'{o}s' Theorem for Non-negative Random
Variables}

\author[Abdessamad DEHAJ]{ A. DEHAJ \textsuperscript{1}}

\address{\textsuperscript{1}Faculty of Sciences Ben M'Sik, Hassan II University of Casablanca. Casablanca, Morocco.}
\email{a.dehaj@gmail.com}

\author[Mohamed Guessous]{ M. Guessous \textsuperscript{1}}
\address{\textsuperscript{1}Faculty of Sciences Ben M'Sik, Hassan II University of Casablanca. Casablanca, Morocco.}
\email{guessousjssous@yahoo.fr}

\author[Noureddine Sabiri]{ N. Sabiri \textsuperscript{1}}
\address{\textsuperscript{1}Faculty of Sciences Ben M'Sik, Hassan II University of Casablanca. Casablanca, Morocco.}
\email{sabiri.noureddine@gmail.com}

\subjclass[2000]{28A20, 40A05}

\keywords{Convergence, Cesaro-convergence, permutation}

\begin{abstract}
We provide a permutation-invariant version of the Koml\'{o}s' theorem for non-negative random variables. The proof is quite elementary in the sense that it did not use the Axiom of Choice, and was based on a recent result in \cite{D:2021}.
\end{abstract}

\maketitle   

\section{Introduction}
In a given probability space $(\Omega, \Sigma ,\mu )$ the renowned Koml\'{o}s' theorem \cite{K:1967} affirms that every bounded sequence of $L^{1}_{\mathbb{R}}(\mu)$ has a subsequence which along with all its subsequence converges $a.e.$ in Césaro mean to an integrable function $f$ defined on the same probability space. This theorem of Koml\'{o}s is well-known and has received several generalizations on diverse directions. For example Berkes \cite{B:1990} proved that every bounded sequence of $L^{1}_{\mathbb{R}}(\mu)$ has a subsequence so that all its permutation are Cesàro converging a.s. to an integrable function. Recently, the first and the second authors of the present paper have proved that the almost surely Cesàro convergence in Komlós theorem holds true for every permutation of any subsequence \cite{D:2021}. As indicated in \cite{W:2004} F. Delbaen and Y. Kabanov asked whether the $L^{1}_{\mathbb{R}}(\mu)$-boundedness condition can be removed for non-negative sequence of random variables, if the limit function in Koml\'{o}s statement allows to take the value $\infty$. Von Weizsacker \cite{W:2004} answered affirmatively by proving a plausible extension of the Koml\'{o}s-Berkes theorem to sequences of non-negative random variables: every sequence of non-negative measurable functions $(f_{n})$ on $(\Omega, \Sigma ,\mu)$ contains a subsequence $(f'_{n})$ which converges $a.e.$ in Césaro mean to some measurable function $f : \Omega \rightarrow \left[ 0, \infty \right]$ and this holds for every permutation of $(f'_{n})$. This becomes known as the Von Weizsacker theorem. Further, by modifying the proof of \cite{W:2004} and using the Koml\'{o}s theorem in \cite{K:1967} instead of the version in \cite{B:1990}, the statement of the Von Weizsacker theorem remains valid for every subsequence of $(f'_{n})$ in place of every permutation (see \cite{K:2009}, Thm. 5.2.3). In \cite{T:2021}, the author also gives a description of the set where the limit function $f$ is finite. There has been much recent work related to the Komlos' theorem, see for example \cite{S:2020, D:2020, G:1997}.
\\
The purpose of this paper is to show that the version of Koml\'{o}s theorem for non-negative measurable functions is invariant under permutation. More precisely, we will show that every sequence of non-negative measurable functions $(f_{n})$ has a subsequence $(f'_{n})$ such that $(h_{\pi(n)})$ converge $a.e.$ in Césaro mean to a measurable function $f$ with values in a $\mathbb{R}^{+}\cup { +\infty } $ for every subsequence $(h_{n})$ of $(f'_{n})$ and every permutation $(\pi (1),\pi (2),...)$ of $(1,2,...)$. More precisely, we have the following result.
\begin{theorem}
Let $(f_{n})_{n}$ be a sequence of non-negative random variables on $(\Omega,\Sigma ,\mu )$. Then, there exists a random variable $f$ with values in a $%
\mathbb{R}
^{+}\cup \left\{ +\infty \right\} $ and a subsequence $(g_{n})_{n}$ of $(f_{n})_{n}$ such that, for any subsequence $(h_{n})_{n}$ of $(g_{n})_{n}$
and for every permutation $(\pi (1),\pi (2),...)$ of $(1,2,...)$
\begin{equation*}
\lim_{n}S_{n}\left( h_{\pi \left( k\right) }\right) =f\text{ }\mu \text{-} a.e.
\end{equation*}
\end{theorem}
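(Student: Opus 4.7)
The plan is to follow the Von Weizs\"acker-style truncation-and-diagonal strategy, but to plug in the subsequence-and-permutation invariant Koml\'os theorem of \cite{D:2021} at every truncation level. For each integer $m \ge 1$ the sequence $(f_n \wedge m)_n$ is bounded in $L^{\infty}(\mu) \subseteq L^{1}(\mu)$, so \cite{D:2021} yields a subsequence of $(f_n)$ along which every further subsequence and every permutation has $\mu$-a.e.\ Ces\`aro-convergent averages, with common integrable limit $f^{(m)}$. A standard Cantor diagonal extraction produces a single subsequence $(g_n)_n$ of $(f_n)_n$ that inherits this property for all $m$ simultaneously. Since $x \mapsto x \wedge m$ is monotone in $m$, the limits $f^{(m)}$ are $\mu$-a.e.\ non-decreasing in $m$, and I set $f := \sup_m f^{(m)}$, a measurable function with values in $[0,+\infty]$; this is the candidate limit.

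The ``$\liminf$'' half is immediate: for any subsequence $(h_n)$ of $(g_n)$ and any permutation $\pi$, the pointwise inequality $h_{\pi(k)} \ge h_{\pi(k)} \wedge m$ and the already-known convergence $S_{N}(h_{\pi(k)} \wedge m) \to f^{(m)}$ force $\liminf_{N} S_{N}(h_{\pi(k)}) \ge f^{(m)}$ $\mu$-a.e.; taking the supremum over $m$ yields $\liminf_{N} S_{N}(h_{\pi(k)}) \ge f$, which already settles the convergence on $\{f = +\infty\}$.

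The reverse ``$\limsup$'' inequality on $\{f < +\infty\}$ is obtained through a deterministic argument on each fibre. Fix once and for all a $\mu$-null exceptional set $\Omega \setminus \Omega_{0}$ outside of which the truncated subsequence/permutation convergence holds \emph{simultaneously} for every $m$, every subsequence and every permutation. For $\omega \in \Omega_{0}$ the real sequence $a_{n} := g_{n}(\omega)$ then has the property that along any subset $\sigma \subseteq \mathbb{N}$ the truncated averages $\tfrac{1}{K}\sum_{k=1}^{K}(a_{\sigma(k)} \wedge m)$ tend to the constant $f^{(m)}(\omega)$. Applying this with the $\omega$-dependent subset $\sigma = \{n : a_{n} > m\}$ collapses every summand to $m$, so if that set were infinite we would get $f^{(m)}(\omega) = m$, contradicting $f(\omega) < +\infty$ as soon as $m > f(\omega)$. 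Hence on $\Omega_{0} \cap \{f < +\infty\}$ the sequence $a_{n}$ is eventually bounded by some integer $m$; then $S_{N}(a) - S_{N}(a \wedge m)$ is a fixed finite quantity divided by $N$ and tends to $0$, while $S_{N}(a \wedge m) \to f^{(m)}(\omega) = f(\omega)$, the last equality because $f^{(m')}$ stabilizes for $m' \ge m$ once $a \wedge m = a$ eventually. The same argument transfers verbatim to $(h_{\pi(k)})$, which is itself a permuted subsequence of $(g_{n})$ and hence covered by the simultaneous property.

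The main obstacle is this very simultaneity: the $\omega$-dependent subset $\{n : g_{n}(\omega) > m\}$ can legitimately be inserted into the hypothesis of \cite{D:2021} at a given $\omega$ only if the exceptional null set does not depend on the subsequence or permutation chosen. I expect that a careful re-reading of \cite{D:2021} (whose construction handles all subsequences and permutations in a single pass) does yield such a uniform null set; otherwise, a measurable-selection argument that replaces the uncountable family of $\omega$-dependent subsequences by a suitably chosen countable one would be the route to close the gap.
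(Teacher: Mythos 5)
Your construction of the subsequence $(g_n)$, the truncation limits $f^{(m)}$, the candidate $f=\sup_m f^{(m)}$, and the $\liminf$ inequality all match the first half of the paper's argument (the paper truncates by setting values $\geq m$ to $0$ rather than capping at $m$, which is immaterial). The gap is in your $\limsup$ step, and it is not the technicality you hope it is: there is \emph{no} uniform null set outside of which the Ces\`aro convergence holds simultaneously for every subsequence. The quantifier ``for every subsequence $(h_n)$, $S_N(h_{k})\to f$ a.e.'' cannot be upgraded to ``a.e., for every subsequence'': take $(f_n)$ i.i.d.\ Bernoulli$(1/2)$; every subsequence Ces\`aro-converges to $1/2$ a.e.\ by the strong law, yet for a.e.\ $\omega$ the $\omega$-dependent subsequence $\{n: f_n(\omega)=1\}$ is infinite and its Ces\`aro means are identically $1$. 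This is exactly the move you make when you feed $\sigma=\{n: g_n(\omega)>m\}$ into the truncated convergence, so that step is invalid. Worse, the conclusion you derive from it --- that $g_n(\omega)$ is eventually bounded on $\{f<\infty\}$ --- is false in general: for independent $f_n$ equal to $n^{1/4}$ with probability $1/n$ and $0$ otherwise, the Ces\`aro means of every subsequence tend to $0$ a.e., while $\limsup_n f_n(\omega)=+\infty$ a.e.\ by the second Borel--Cantelli lemma.

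The paper closes this half by measure-theoretic rather than fibre-wise means, and this is where the real work lies. Lemma~\ref{lem2} isolates a non-negligible set $C$ on which the averages necessarily diverge whenever $b=\lim_k\sup_n\mu(f'_n1_A\geq k)>0$; off $C$ the tail condition $\lim_k\sup_n\mu(g'_n1_{A\cap C^c}\geq k)=0$ lets Lemma~\ref{lem1} produce levels $p_k$ with $\sum_{k}\mu(g'_k1_{A\cap C^c}\geq p_k)<+\infty$; after passing to an equivalent probability $\nu$ making $u1_A$ integrable, the truncations $F_{p_k}(g'_{\varphi(k)}1_{A\cap C^c})$ form an $L^1(\nu)$-bounded sequence to which Theorem~\ref{thm1} applies once more, and the Borel--Cantelli lemma transfers the resulting a.e.\ Ces\`aro convergence back to the untruncated sequence. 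Some mechanism of this kind --- summable exceedance probabilities plus Borel--Cantelli applied along the fixed extracted subsequence, rather than pointwise eventual boundedness along $\omega$-dependent subsequences --- is what you would need to replace your fibre argument.
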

We develop similar arguments which some of them were used in Von Weizsacker paper but, we do not appeal to the Axiom of Choice as it was done in \cite{W:2004}. The method provides also an elementary proof for the original Von Weizsacker result.
\section{Notations and Preliminaries}
We denote by $\mathtt{Si}(\mathbb{N}^{\ast })$ the set of the strictly increasing sequences of natural numbers. For any $a\geq 0,$ $m\in \mathbb{N}^{\ast }$ and $\left( f_{n}\right) _{n}$ a sequence of random variables, we note
\begin{eqnarray*}
F_{a}(f_{n})\left( \omega \right) &=&\left\{ 
\begin{array}{ccc}
f_{n}(\omega) & \text{if} & \left\Vert f_{n}(\omega)\right\Vert <a \\ 
0 & \text{if not.} & 
\end{array}%
\right.  \\
S_{m}(f_{k}) &=&\frac{1}{m}\sum_{k=1}^{m}f_{k}
\end{eqnarray*}
We give in the following subsection three results which will be useful to prove our main result. The first one is a recent version of the Komlós' theorem from \cite{D:2021}. The second one is an useful lemma for our analysis, its proof is based on the similar arguments to the one given in \cite{W:2004}. The third one is a simple extraction Lemma.
\begin{theorem}\label{thm1}
Let $H$ be a Hilbert space and $(f_{n})_{n}$ a bounded sequence in $L_{H}^{1}\left( \Omega ,\Sigma,\mu \right) $. Then, there exists a subsequence $(g_{n})_{n}$ of $(f_{n})_{n}$ and $f$ in $L_{H}^{1}\left( \Omega ,\Sigma,\mu \right) $ such that
\begin{equation*}
\lim_{n}S_{n}\left( h_{\pi \left( k\right) }\right) \text{ converges to }f%
\text{ }%
{\mu}%
\text{-}a.e.,
\end{equation*}
for any subsequence $(h_{n})_{n}$ of $(g_{n})_{n}$ and for every permutation $(\pi (1),\pi (2),...)$ of $(1,2,...)$.
\end{theorem}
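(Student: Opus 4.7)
The plan is to lift the Berkes--Komlós scalar strategy to the Hilbert-valued setting, taking advantage of the fact that when $H$ is Hilbert the Bochner space $L^2_H(\mu)$ is itself a Hilbert space. The inner product on $L^2_H$ provides the orthogonality machinery needed to bound Cesàro-mean variances, which is the essential analytic tool, while the $L^1_H$-boundedness of the original sequence secures integrability of the limit.

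First I would truncate and pre-extract. Set $M := \sup_n \|f_n\|_{L^1_H}$. For each integer $a \geq 1$, the truncated sequence $(F_a(f_n))_n$ is pointwise bounded by $a$, hence bounded in $L^2_H(\mu)$. Since $L^2_H(\mu)$ is reflexive, weak sequential compactness together with a standard diagonal procedure produces a subsequence, still written $(f_n)$, and elements $\phi_a \in L^2_H(\mu)$ such that $F_a(f_n) \rightharpoonup \phi_a$ weakly in $L^2_H(\mu)$ for every $a \in \mathbb{N}^\ast$. Weak-$L^2$ lower semicontinuity of the $L^1$-norm gives $\|\phi_a\|_{L^1_H} \leq M$, and a suitable limit procedure on the $\phi_a$ (along a diagonal $a_n \uparrow \infty$ to be specified) identifies the candidate $f \in L^1_H(\mu)$.

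Next I would extract a sparse subsequence $(g_n)$ robust to permutations. Having fixed thresholds $a_n \uparrow \infty$ and selected $g_1, \ldots, g_n$, I would pick $g_{n+1}$ far enough out in $(f_k)$ so that (i) $\mu(\|g_{n+1}\| \geq a_{n+1}) \leq 2^{-n}$ (via Markov and the $L^1$-bound) and (ii) the residual $r_{n+1} := F_{a_{n+1}}(g_{n+1}) - \phi_{a_{n+1}}$ satisfies $|\langle r_{n+1}, r_k \rangle_{L^2_H}| \leq 2^{-n}$ for every $k \leq n$; this is possible since $r_m \rightharpoonup 0$ along the pre-extracted subsequence. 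Condition (i) and Borel--Cantelli ensure that $g_n = F_{a_n}(g_n)$ for all large $n$ almost surely, so only truncated partial sums matter. Condition (ii), together with the identity
\[
\Bigl\| \frac{1}{n} \sum_{k=1}^n r_{\pi(k)} \Bigr\|_{L^2_H}^2 = \frac{1}{n^2} \sum_{k=1}^n \|r_{\pi(k)}\|_{L^2_H}^2 + \frac{2}{n^2} \sum_{1 \leq j < k \leq n} \langle r_{\pi(j)}, r_{\pi(k)} \rangle_{L^2_H},
\]
bounds the Cesàro-mean $L^2_H$-norm by a $\pi$-independent quantity summable along $n_j = 2^j$. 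A Menshov--Rademacher-type maximal inequality then yields a.e. convergence $\frac{1}{n} \sum_{k=1}^n r_{\pi(k)} \to 0$ for every permutation $\pi$ and every further subsequence $(h_n)$ of $(g_n)$. Combining this with the a.e. behaviour of $\phi_{a_n}$ recovers $f$ as the a.e. Cesàro limit.

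The main obstacle, as in Berkes' refinement of Komlós, is guaranteeing that the inner-product and norm bounds survive arbitrary reindexing: a permutation may bring a late-index residual with large $L^2_H$-norm into an early position of the partial sum. The selection must be strong enough that $\sum_k |\langle r_k, r_m \rangle_{L^2_H}|$ is summable uniformly in $m$, while simultaneously $\sum_k \|r_k\|_{L^2_H}^2 / k^2 < \infty$. Balancing the growth rate of the truncation levels $a_n$ against the geometric decay $2^{-n}$ of the orthogonality defects is the delicate step; once tuned, the extension from integer thresholds $a$ to arbitrary positive real $a$ and from truncated to untruncated partial sums is routine.
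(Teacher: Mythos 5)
The paper does not actually prove Theorem~\ref{thm1}: it is quoted from \cite{D:2021} and used as a black box, so there is no internal proof to compare yours against; I can only judge the sketch on its own terms, and it has a genuine quantitative gap at its core. Your selection step demands $\mu(\|g_{n+1}\|\geq a_{n+1})\leq 2^{-n}$ ``via Markov and the $L^{1}$-bound'', which forces $a_{n+1}\geq M2^{n}$. But the only bound on the residuals you then have is $\|r_{n}\|_{L_{H}^{2}}\leq\|F_{a_{n}}(g_{n})\|_{L_{H}^{2}}+\|\phi_{a_{n}}\|_{L_{H}^{2}}$ with $\|F_{a_{n}}(g_{n})\|_{L_{H}^{2}}^{2}\leq a_{n}M$, i.e.\ $\|r_{n}\|_{L_{H}^{2}}^{2}$ of order $M^{2}2^{n}$, so the diagonal term $\frac{1}{n^{2}}\sum_{k\leq n}\|r_{k}\|_{L_{H}^{2}}^{2}$ in your displayed identity is of order $2^{n}/n^{2}\rightarrow\infty$: the $L^{2}$ estimate on the Cesàro means diverges even for the identity permutation. (Try $f_{n}=2^{n}1_{A_{n}}$ with $\mu(A_{n})=2^{-n}$; the conclusion of the theorem holds trivially, yet $\|S_{n}(f_{k})\|_{L^{2}}\rightarrow\infty$.) Summable tails and summable $n^{-2}\|r_{n}\|_{L^{2}}^{2}$ cannot both be obtained from crude Markov/$L^{1}$ estimates. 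Komlós resolves this by first extracting a subsequence along which the distributions of the $f_{n}$ stabilize; only then do $\sum_{n}\mu(\|f_{n}\|\geq n)<\infty$ and $\sum_{n}n^{-2}\|F_{n}(f_{n})\|_{L^{2}}^{2}<\infty$ hold simultaneously, with \emph{linear} truncation levels. That distributional extraction is the missing idea, and ``balancing the growth rate of $a_{n}$ against $2^{-n}$'' cannot substitute for it.

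Second, even granting $\sum_{n}n^{-2}\|r_{n}\|_{L_{H}^{2}}^{2}<\infty$ and near-orthogonality, permutation-invariance does not follow from a Menshov--Rademacher-type inequality: almost-everywhere convergence of orthogonal series is notoriously not rearrangement-invariant, and a permutation can front-load residuals of large norm so that $\frac{1}{n^{2}}\sum_{k=1}^{n}\|r_{\pi(k)}\|^{2}$ is no longer $o(1)$. For instance, if $\|r_{m}\|^{2}\sim m/(\log m)^{2}$ (consistent with the Komlós estimates) and $\pi^{-1}(2^{2^{j}})=j$, this diagonal term blows up along the rearranged series. You correctly identify the danger (``a permutation may bring a late-index residual with large $L_{H}^{2}$-norm into an early position''), but the proposed remedy --- uniform summability of $\sum_{k}|\langle r_{k},r_{m}\rangle|$ --- controls only the off-diagonal sum, not the diagonal one. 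This is precisely the obstruction that forces Berkes, and the argument of \cite{D:2021}, to replace bare near-orthogonality in $L_{H}^{2}$ by additional structure (a martingale-difference/exchangeability-type decomposition with uniformly controlled conditional moments), so the decisive step of your proof is missing rather than merely ``delicate''.
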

The following lemma generalizes Lemma 1 and Lemma 2 in \cite{W:2004}, because the result include all permutation of any subsequence of $(g_{n})_{n}$.
\begin{lemma}\label{lem2}
Let $(f_{n})_{n}$ be a sequence of non-negative random variables on $(\Omega,\Sigma ,\mu )$. Assume that
\begin{equation*}
b:=\lim_{k}\sup_{n}\mu ( \left\lbrace f_{n}\geq k \right\rbrace )>0.
\end{equation*}
Then, there exists a non-negligible measurable set $C$ and a subsequence $(g_{n})_{n}$ of $(f_{n})_{n}$ such that for every $ \psi \in \mathtt{Si}(\mathbb{N}^{\ast })$ and a permutation $(\pi (1),\pi(2),...)$ of $\ (1,2,...)$
\begin{eqnarray}
\lim_{n}S_{n}\left( g_{\psi \circ \pi \left( k\right) }\right) &=&+\infty \text{ on }C
\\
\lim_{k}\sup_{n}\mu \left( g_{n}1_{C^{c}}\geq k\right) &=&0.
\end{eqnarray}
\end{lemma}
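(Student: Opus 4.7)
The plan is a three-stage extraction from $(f_n)$. First, using the hypothesis $b>0$ together with the $\mu$-a.s.\ finiteness of each $f_n$, I show that for every $k$ the set $S_k := \{n : \mu(\{f_n \geq k\}) \geq b/2\}$ is infinite: it is nonempty since $\sup_n \mu(\{f_n \geq k\}) \geq b$, and if some $S_{k_0}$ were finite, a pigeonhole argument on its finitely many elements combined with the monotone convergence $\mu(\{f_n \geq k\})\downarrow \mu(\{f_n=\infty\})=0$ would produce some $n^{\ast} \in S_{k_0}$ with $\mu(\{f_{n^{\ast}} = \infty\}) \geq b/2$, a contradiction. Picking a strictly increasing $n_k \in S_k$ and renaming $g_k := f_{n_k}$ yields a first subsequence with $\mu(\{g_k \geq k\}) \geq b/2$ for every $k$.

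Next, I apply Theorem \ref{thm1} (with $H=\mathbb{R}$) diagonally to the $L^{1}$-bounded sequences $(1_{\{g_n\geq j\}})_n$, one value of $j$ at a time, extracting a further subsequence (still denoted $(g_n)$) and measurable functions $\phi_j:\Omega\to[0,1]$ with
\[
\lim_n S_n\bigl(1_{\{g_{\psi\circ\pi(k)}\geq j\}}\bigr)=\phi_j\quad \mu\text{-a.e.}
\]
simultaneously for every $\psi\in\mathtt{Si}(\mathbb{N}^{\ast})$, every permutation $\pi$, and every $j$. The functions $\phi_j$ are non-increasing in $j$, so $\phi:=\lim_j \phi_j$ exists; set $C:=\{\phi>0\}$. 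Bounded convergence gives
\[
\int \phi_j\, d\mu=\lim_n \tfrac{1}{n}\sum_{k=1}^n \mu(\{g_k\geq j\})\geq b/2,
\]
since $\mu(\{g_k\geq j\})\geq \mu(\{g_k\geq k\})\geq b/2$ whenever $k\geq j$; hence $\int \phi\,d\mu \geq b/2$, so $\mu(C)\geq b/2>0$. For condition~(1), the pointwise bound $g\geq j\cdot 1_{\{g\geq j\}}$ for $g\geq 0$ yields $S_n(g_{\psi\circ\pi(k)})\geq j\cdot S_n(1_{\{g_{\psi\circ\pi(k)}\geq j\}})$, whose right-hand side tends to $j\phi_j\geq j\phi$ on $C$ as $n\to\infty$; therefore $\liminf_n S_n(g_{\psi\circ\pi(k)})\geq j\phi$ on $C$ for every $j$, and letting $j\to\infty$ on $\{\phi>0\}$ forces $\lim_n S_n(g_{\psi\circ\pi(k)})=+\infty$ on $C$.

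Finally, to secure condition~(2), I refine once more by a diagonal Bolzano–Weierstrass argument on the bounded scalar sequences $(\mu(\{g_n\geq j\}\cap C^{c}))_n$, obtaining a subsequence (still denoted $(g_n)$) along which $\beta_j:=\lim_n \mu(\{g_n\geq j\}\cap C^{c})$ exists for every $j$. Since the Cesàro-limit property of the second stage is preserved under subsequences, $\beta_j$ also coincides with the Cesàro mean of $\mu(\{g_n\geq j\}\cap C^{c})$, namely $\int_{C^{c}}\phi_j\,d\mu$; bounded convergence in $j$ then forces $\beta_j\to \int_{C^{c}}\phi\,d\mu=0$. To extract a uniform-in-$n$ estimate, given $\epsilon>0$, I pick $j_0$ with $\beta_{j_0}<\epsilon/2$, then $N$ so large that $\mu(\{g_n\geq j_0\}\cap C^{c})<\epsilon$ for all $n\geq N$, and finally (using $\mu(\{g_n\geq j\}\cap C^{c})\to 0$ as $j\to\infty$ for each fixed $n$ by a.s.\ finiteness of $g_n$) a single $j_1\geq j_0$ such that the same bound holds for the finitely many $n<N$. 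Then $\sup_n \mu(\{g_n\geq j\}\cap C^{c})<\epsilon$ for every $j\geq j_1$, which is (2).

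The main obstacle is this last step: Theorem \ref{thm1} provides only Cesàro-averaged information, so passing from $\beta_j=\int_{C^{c}}\phi_j\,d\mu\to 0$ to the uniform sup-estimate (2) requires both the extra Bolzano–Weierstrass extraction stabilizing $\mu(\{g_n\geq j\}\cap C^{c})$ in $n$, and the two-regime split above that handles the finitely many initial indices via the a.s.\ finiteness of each $g_n$.
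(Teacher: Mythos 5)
Your proof is correct, but it follows a genuinely different route from the paper's. The paper makes a single application of Theorem~\ref{thm1} to the \emph{diagonal} indicators $1_{C_n}$ with $C_n=\{f^1_n\geq n\}$, defines $C=\{g>0\}$ from the one resulting limit $g$, obtains (1) from the bound $S_n(g_{\psi\circ\pi(k)})\geq r\,S_n(1_{C_{\varphi\circ\psi\circ\pi(k)}})$ for each $r>0$ (which silently uses that only finitely many of the thresholds $\varphi\circ\psi\circ\pi(k)$ are below $r$), and proves (2) by contradiction, re-running the first extraction inside $C^c$. You instead apply Theorem~\ref{thm1} diagonally to the \emph{fixed-level} indicators $1_{\{g_n\geq j\}}$ for every $j$, producing a decreasing family $\phi_j$ with limit $\phi$ and setting $C=\{\phi>0\}$; this buys you a cleaner (1), since $S_n(g_{\psi\circ\pi(k)})\geq j\,S_n(1_{\{g_{\psi\circ\pi(k)}\geq j\}})\to j\phi_j\geq j\phi$ holds term by term with no finitely-many-exceptions caveat, and a direct, constructive proof of (2) via $\lim_n\mu(\{g_n\geq j\}\cap C^c)=\int_{C^c}\phi_j\,d\mu\to\int_{C^c}\phi\,d\mu=0$ followed by the two-regime split over $n\geq N$ and $n<N$. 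Two remarks: your opening pigeonhole argument showing each $S_k$ is infinite is a point the paper's first extraction glosses over, so it is a welcome addition; and your third-stage Bolzano--Weierstrass extraction is actually superfluous, since a bounded scalar sequence all of whose subsequences have Cesàro means converging to $L$ must itself converge to $L$ (this is essentially the paper's Lemma~\ref{lem}), so $\mu(\{g_n\geq j\}\cap C^c)\to\int_{C^c}\phi_j\,d\mu$ already holds without further refinement. The price of your approach is the extra diagonalization over levels $j$, but that is the same device the paper itself uses in Section~3, and overall your argument is at least as rigorous as the original.
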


\begin{proof}
We have $\sup_{n}\mu \left( f_{n}\geq k\right) \geq b >0$ for all $k$, then there exists a  subsequence $\left( f_{n}^{1}\right) _{n}$ of $(f_{n})_{n}$ such that for every $n\in \mathbb{N}$
\begin{equation*}
\mu \left( f_{n}^{1}\geq n\right) \geq \frac{b}{2}.
\end{equation*}
Let $C_{n}=\left\{ f_{n}^{1}\geq n\right\} (n\geq1)$. Applying Theorem~\ref{thm1} to the sequence $\left( 1_{C_{n}}\right) _{n}$ there exists $g\in L_{%
\mathbb{R}
}^{1}$ and $\varphi \in \mathtt{Si}\left( 
\mathbb{N}^{\ast }
\right)$ such that for any $\psi \in \mathtt{Si}\left( \mathbb{N} ^{\ast }\right)$ and every permutation $(\pi (1),\pi (2),...)$ of $(1,2,...)$ we have
\begin{equation*}
\lim_{n} S_{n}\left( 1_{C_{\varphi \circ \psi \circ \pi (k)}}\right) =g\text{ }%
\mu \text{-a.e.,}
\end{equation*}
and by the D.C.T we get 
\begin{equation}\label{eq1}
\forall A \in \Sigma , \quad \lim_{n} \int_{A} S_{n}\left( 1_{C_{\varphi \circ \psi \circ \pi (k)}}\right) d\mu =\int_{A} g d\mu.
\end{equation}
Let $C=\left\{ g>0\right\}$ and $g_{n}=f^{1}_{ \varphi (n)}$. Since $g\leq 1$ $\mu $-a.e., we get by (\ref{eq1})
\begin{equation*}
\mu ( C ) \geq \int g d\mu =\lim_{n}\int
S_{n}\left( 1_{C_{\varphi (k)}}\right) d\mu \geq \frac{b}{2},
\end{equation*}
therefore
\begin{equation*}
\mu \left( C\right) > 0.
\end{equation*}
Let $\psi \in \mathtt{Si}\left(\mathbb{N}^{*}\right)$, a permutation $(\pi (1),\pi (2),...)$ of $(1,2,...)$ and every $r>0$. One has for almost everywhere $\omega \in \Omega$
\begin{eqnarray*}
\underset{n}{\underline{\lim }} \: S_{n}(g_{ \psi \circ \pi  (k)})(w) &\geq &%
\underset{n}{\underline{\lim }} \: S_{n}(g_{  \psi \circ \pi  (k)}1_{C_{\varphi \circ \psi \circ \pi (k)}})(w) \\
&\geq &r \: \underset{n}{\underline{\lim }} \: S_{n}(1_{C_{ \varphi \circ \psi \circ \pi (k)}}) (w)\\
&=&rg(w),
\end{eqnarray*}
As $g >  0$ on $C$ we get
\begin{equation*}
\underset{n}{\underline{\lim }} \: S_{n}(g_{ \psi \circ \pi (k)})=+\infty \text{ }%
\mu \text{-a.e. on }C.
\end{equation*}
To prove the second assertion let us note $d^{^{\prime }}=\lim_{k}\sup_{n}\mu \left\{ g_{\psi (n)}1_{C^{c}}\geq k\right\}$. If $d'>0$, by the same way, there exists a subsequence $(g^{1}_{n})$ of $(g_{\psi(n)})$ such that for all $n\in \mathbb{N}$
\begin{equation*}
\mu ( \left\lbrace  g^{1}_{n}1_{C^{c}}\geq n  \right\rbrace ) \geq \frac{d'}{2},
\end{equation*}
By (\ref{eq1}) we have 
\begin{equation*}
0=\lim_{n}\int_{C^{c}} S_{n}\left( 1_{C_{\varphi \circ \psi \circ \phi( k) }}\right) d\mu = \lim_{n} S_{n}(\mu ( \left\lbrace g^{1}_{k} 1_{C^{c}}\geq k \right\rbrace ) \geq \frac{d'}{2}
\end{equation*}
which is contradiction. Hence $d'=0$.
\end{proof}
\begin{lemma}\label{lem1}
Let $(f_{n})_{n}$ be a sequence of non-negative random variables on $(\Omega,\Sigma ,\mu )$ such that 
\begin{equation*}
\lim_{k}\sup_{n}\mu \left( f_{n}\geq k\right) =0.
\end{equation*}%
Then, there is a increasing sequence of positive real numbers $\left(p_{k}\right) _{k}$ such that 
\begin{equation*}
\underset{k\geq 1}{\sum }\mu \left( f_{k}\geq p_{k}\right) <+\infty.
\end{equation*}
Therefore,
\begin{equation*}
\lim_{k} \: ( f_{k}- F_{p_{k}}(f_{k})) =0 \qquad \mu-a.e.
\end{equation*}
\end{lemma}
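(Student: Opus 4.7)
The plan is to extract an increasing sequence $(p_k)$ from the hypothesis via a simple diagonal choice, then invoke Borel--Cantelli to derive the almost-sure conclusion. The hypothesis $\lim_k \sup_n \mu(f_n \geq k) = 0$ says that the sequence $k \mapsto \sup_n \mu(f_n \geq k)$ tends to $0$, so for every $k \geq 1$ there exists an integer $N_k$ with
\begin{equation*}
\sup_n \mu(f_n \geq N_k) < 2^{-k}.
\end{equation*}
First I would define $p_k$ recursively by $p_1 = N_1$ and $p_{k+1} = \max(p_k + 1,\, N_{k+1})$, which yields a strictly increasing sequence of positive reals with $p_k \geq N_k$ for every $k$.

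Next I would observe that by construction
\begin{equation*}
\mu(f_k \geq p_k) \leq \sup_n \mu(f_n \geq p_k) \leq \sup_n \mu(f_n \geq N_k) < 2^{-k},
\end{equation*}
so $\sum_{k \geq 1} \mu(f_k \geq p_k) < +\infty$, which is the first assertion.

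For the second assertion, I would apply the (first) Borel--Cantelli lemma to the events $A_k = \{f_k \geq p_k\}$: since $\sum_k \mu(A_k) < +\infty$, we get $\mu(\limsup_k A_k) = 0$. Hence for $\mu$-almost every $\omega$ there is $k_0(\omega)$ such that $f_k(\omega) < p_k$ for all $k \geq k_0(\omega)$. By definition of $F_{p_k}$ this means $F_{p_k}(f_k)(\omega) = f_k(\omega)$ for $k \geq k_0(\omega)$, hence
\begin{equation*}
f_k(\omega) - F_{p_k}(f_k)(\omega) = 0 \qquad \text{for all } k \geq k_0(\omega),
\end{equation*}
giving $\lim_k (f_k - F_{p_k}(f_k)) = 0$ $\mu$-a.e.

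There is no real obstacle here: the content is entirely in the diagonal choice of the $p_k$ together with Borel--Cantelli, and the only minor care needed is to enforce the strict monotonicity $p_k < p_{k+1}$ while keeping the summability, which is handled by taking $p_{k+1} = \max(p_k+1, N_{k+1})$.
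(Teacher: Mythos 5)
Your proof is correct and follows essentially the same route as the paper: choose $p_k$ increasing with $\sup_n \mu(f_n \geq p_k) < 2^{-k}$ so the series converges, then conclude by Borel--Cantelli. You in fact supply more detail than the paper's own proof, which states only the construction of $(p_k)$ and leaves the Borel--Cantelli step for the a.e.\ conclusion implicit.
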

\begin{proof}
For each $k \geq 1$ we construct a a increasing sequence $(p_{k})$  such that 
\begin{equation*}
\sup_{n}\mu \left( f_{n}\geq p_{k}\right) < \frac{1}{2^{k}}.
\end{equation*}
Hence,
\begin{equation*}
\underset{k\geq 1}{\sum }\mu \left( f_{k}\geq p_{k}\right) <+\infty.
\end{equation*}
\end{proof}
We close this section by the following useful lemma.
\begin{lemma}\label{lem}
Let $\nu$ any probability measure, $(f_{n})$ a sequence of $L_{\mathbb{R}}^{1}(\nu)$ with value in $\left[0, M \right]$ ($M>0$) and $f\in L_{\mathbb{R}}^{1}(\nu)$ such that 
\begin{equation*}
S_{n}(h_{k}) \underset{n} \rightarrow f \qquad a.e.
\end{equation*}
for every subsequence $(h_{n})$ of $(f_{n})$. Then $(f_{n})$ converges weakly to $f$ in $L_{\mathbb{R}}^{1}(\nu)$.
\end{lemma}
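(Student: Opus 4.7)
The plan is to establish weak $L^{1}(\nu)$-convergence scalar-wise, via the Urysohn subsequence principle in $\mathbb{R}$. Fix an arbitrary test function $g\in L^{\infty}(\nu)$; it then suffices to show that every subsequence of the bounded numerical sequence $\bigl(\int g\,f_{n}\,d\nu\bigr)_{n}$ admits a further subsequence tending to $\int g\,f\,d\nu$, for this forces $\int g\,f_{n}\,d\nu\to \int g\,f\,d\nu$, which is precisely weak convergence in $L^{1}(\nu)$ when run over all such $g$.

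Given a subsequence $(f_{n_{k}})$ of $(f_{n})$, the bound $0\leq f_{n_{k}}\leq M$ combined with $\nu$ being a probability measure makes $(f_{n_{k}})$ bounded in the Hilbert space $L^{2}(\nu)$. By reflexivity I extract a further subsequence $(f_{n_{k_{j}}})_{j}$ converging weakly in $L^{2}(\nu)$ to some $\tilde f\in L^{2}(\nu)$.

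The crux, and really the only non-routine step, is to identify $\tilde f$ with $f$. I compute the weak-$L^{2}$ limit of the Cesàro averages $S_{j}\bigl(f_{n_{k_{\ell}}}\bigr)$ in two ways. Pairing with an arbitrary $\varphi\in L^{2}(\nu)$ and invoking the classical Cesàro theorem for convergent scalar sequences yields
\[
\int \varphi\,S_{j}\bigl(f_{n_{k_{\ell}}}\bigr)\,d\nu \;=\; \frac{1}{j}\sum_{\ell=1}^{j}\int\varphi\,f_{n_{k_{\ell}}}\,d\nu \;\longrightarrow\; \int\varphi\,\tilde f\,d\nu,
\]
so $S_{j}\bigl(f_{n_{k_{\ell}}}\bigr)\rightharpoonup \tilde f$ weakly in $L^{2}(\nu)$. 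On the other hand, $(f_{n_{k_{j}}})$ is itself a subsequence of $(f_{n})$, so the hypothesis delivers $S_{j}\bigl(f_{n_{k_{\ell}}}\bigr)\to f$ $\nu$-a.e.; since these partial averages are uniformly bounded by $M$, dominated convergence upgrades this to $L^{2}(\nu)$-convergence, hence weak convergence, to $f$. Uniqueness of the weak limit therefore forces $\tilde f = f$ $\nu$-a.e.

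To finish, observe that $L^{\infty}(\nu)\subseteq L^{2}(\nu)$ since $\nu$ is finite, so the weak-$L^{2}$ convergence $f_{n_{k_{j}}}\rightharpoonup f$ specializes to $\int g\,f_{n_{k_{j}}}\,d\nu\to \int g\,f\,d\nu$ for our fixed $g\in L^{\infty}(\nu)$, closing the subsequence argument. The only substantial obstacle I anticipate is the weak-limit identification in the third paragraph; the remaining ingredients — reflexivity of $L^{2}$, dominated convergence, and the Cesàro theorem in $\mathbb{R}$ — are textbook-level and combine without friction.
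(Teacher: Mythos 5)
Your proof is correct. It differs from the paper's in execution, though the underlying mechanism is the same. The paper's argument is a two-step reduction: the dominated convergence theorem shows that $S_{n}(h_{k})\to f$ weakly in $L^{1}(\nu)$ for every subsequence $(h_{n})$, and then one invokes the elementary scalar fact that a bounded real sequence, all of whose subsequences are Ces\`aro-convergent to the same limit $a$, must itself converge to $a$ --- applied to the numbers $\int g f_{n}\,d\nu$ for each fixed $g\in L^{\infty}(\nu)$. Your route essentially re-proves that scalar fact, but does so at the level of functions: you replace Bolzano--Weierstrass on the bounded scalar sequence $\bigl(\int g f_{n_{k}}\,d\nu\bigr)_{k}$ by weak sequential compactness of the bounded set $\{f_{n}\}$ in $L^{2}(\nu)$, identify the weak limit of the extracted subsequence with $f$ via the a.e.\ Ces\`aro convergence and dominated convergence, and close with the Urysohn subsequence principle. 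Each step checks out: the uniform bound $0\le f_{n}\le M$ and the finiteness of $\nu$ justify the $L^{2}$ bound, the a.e.\ limit $f$ also lies in $[0,M]$ so dominated convergence upgrades $S_{j}\to f$ to strong (hence weak) $L^{2}$ convergence, and uniqueness of weak limits gives $\tilde f=f$. What the paper's version buys is brevity and a lower-tech toolkit (no reflexivity, no Eberlein--\v{S}mulian); what yours buys is that the appeal to the ``elementary property of Ces\`aro convergence'' is made fully explicit rather than cited, and the detour through $L^{2}$ even yields the slightly stronger conclusion that every subsequence of $(f_{n})$ has a further subsequence converging weakly in $L^{2}(\nu)$ to $f$. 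The only stylistic quibble is that the $L^{2}$ machinery is heavier than necessary for a fixed test function $g$: Bolzano--Weierstrass on the scalars $\int g f_{n_{k}}\,d\nu$ would already suffice.
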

\begin{proof}
The D.C.T gives that $(S_{n}(h_{k}))_{n}$ converges to $f$ weakly in $L_{\mathbb{R}}^{1}(\nu)$ for every subsequence $(h_{n})$ of $(f_{n})$. Hence by an elementary property of Césaro convergence of real valued sequence we deduce that $(f_{n})$ converges to $f$ weakly in $L_{\mathbb{R}}^{1}(\nu)$.
\end{proof}
\section{ Proof of the Main Result}
For each $k\in \mathbb{N}^{\ast }$ the sequence $(F_{k}(f_{n}))_{n}$ is bounded in $L_{\mathbb{R}}^{1}$, so by Theorem~\ref{thm1} there is a sequence of random variable $(u_{k})_{k}$ and subsequences $\left(f_{n}^{1}\right) _{n},$ $\left( f_{n}^{2}\right) _{n},$ $\left(f_{n}^{3}\right) _{n},...,\left( f_{n}^{k}\right) _{n},...$ of $(f_{n})_{n}$, where $(f_{n}^{k+1})_{n}$ is a subsequence of $(f_{n}^{k})_{n}$ such that $(F_{k}(f_{n}))_{k}$ converges weakly to $u_{k}$ in $L_{\mathbb{R}}^{1}$ for all $k \geq 1$ and, for any permutation $(\pi (1),\pi (2),...)$ of $ (1,2,...)$
\begin{equation*}
\lim_{n}\frac{1}{n}\underset{p=1}{\overset{n}{\sum }}F_{k}(f_{\pi \left(p\right) }^{k}(\omega))= u_{k}(\omega)\text{ }\mu \text{-a.e., for all }k\in 
\mathbb{N}
^{\ast }.\text{ }
\end{equation*}
Moreover, this equality remains valid if we replace $(f_{n}^{k})_{n}$ by any of it's subsequences. Put $f'_{n}=f_{n}^{n} (n\geq 1)$, then for all $k \geq 1$ 
\begin{equation*}
\lim_{n}\frac{1}{n}\underset{p=1}{\overset{n}{\sum }}F_{k}(h_{\pi \left(p\right) }(\omega))=u_{k}(\omega)\text{ }\mu \text{-a.e.},
\end{equation*}
where $(h_{n})_{n}$ is a subsequence of $(f_{n}^{'})_{n}$ and $(\pi (1),\pi (2),...)$  any permutation of $ (1,2,...)$.
For all $k,n \geq 1$ we have $F_{k+1}(f_{n}) \geq F_{k}(f_{n})$ and since $(f_{n})$ is positive, the sequence $\left( u_{k}\right) _{k}$ is decreasing $\mu $-a.e. so,
\begin{equation*}
\underset{n}{\underline{\lim }}\frac{1}{n}\underset{p=1}{\overset{n}{\sum }}%
h_{\pi \left( p\right) }(\omega)\text{ }\geq \underset{n}{\lim }\frac{%
1}{n}\underset{p=1}{\overset{n}{\sum }}F_{k}(h_{\pi \left( p\right) }(\omega))=u_{k}(\omega),\text{ }\mu \text{-a.e., for all }k\in 
\mathbb{N}
^{\ast },
\end{equation*}
and therefore for almost everywhere $\omega \in \Omega$
\begin{equation*}
\underset{n}{\underline{\lim }}\frac{1}{n}\underset{p=1}{\overset{n}{\sum }}%
h_{\pi \left( p\right) }(\omega)\text{ }\geq u(\omega):= \lim_{k} \: u_{k}(\omega) \in \left[ 0, +\infty \right].
\end{equation*}
Let $A = \left\lbrace u < +\infty \right\rbrace $ and $b=\lim_{k}\sup_{n}\mu \left( f'_{n}1_{A}\geq k\right)$ we have for almost everywhere $ \omega \in A^{c}$
\begin{equation*}
\underset{n}{\lim } \:S_{n}\left( h_{\pi \left( k\right) }\right) (\omega)=+\infty.
\end{equation*}
\textbf{If} $b>0,$ there exists by Lemma~\ref{lem2} a non-negligible measurable set $C$ and a subsequence $(g'_{n})_{n}$ of $(f'_{n})_{n}$, such that for any subsequence $(h_{n})_{n}$ of $(g_{n}')_{n}$ and
for every permutation $(\pi (1),\pi (2),...)$ of $ (1,2,...)$
\begin{eqnarray*}
\lim_{n}S_{n}\left( h_{\pi \left( k\right) }1_{A}\right)  &=&+\infty \text{
on }C, \\
\lim_{k}\sup_{n}\mu \left( h_{n}1_{A\cap C^{c}}\geq k\right)  &=&0.
\end{eqnarray*}
Applying Lemma~\ref{lem1} to the sequence $\left( 1_{A\cap C^{c}}g_{n}^{^{\prime}}\right)_{n}$, there exists an increasing sequence of positive real numbers $\left( p_{k}\right) _{k}$ such that 
\begin{equation*}
1_{A \cap C^{c}}(g'_{n}-F_{p_{k}}(g'_{n})) \underset{n}\rightarrow 0 \quad \mu-a.e.
\end{equation*}
Let $\nu $ be any probability measure equivalent to $\mu $ such that $1_{A}u\in L_{\mathbb{R}}^{1}\left( \nu \right)$.  Lemma~\ref{lem} gives that for all $k \geq 1$
\begin{equation*}
\underset{n}{\lim }\int_{A \cap C^{c}}F_{p_{k}}\left( g'_{n}\right) \: d\nu =\int_{A \cap C^{c}}u_{k} \: d\nu \leq \int_{A\cap C^{c}} u \: d\nu< M+1,
\end{equation*}
where $M = \int_{A} u \: d\nu$. Let $\varphi \in \mathtt{Si}(\mathbb{N}^{\ast })$ such that 
\begin{equation*}
\int_{A\cap C^{c}}F_{p_{k}}\left( g_{\varphi \left( k\right) }^{^{\prime
}}\right) d\nu \leq M+1\text{ for all }k\geq 1.
\end{equation*}
Since the sequence $\left( F_{p_{k}}\left( g'_{\varphi \left(k\right)} 1_{A\cap C^{c}}  \right) \right) _{k}$ is bounded in $L_{\mathbb{R}}^{1}\left( \nu \right)$, by Theorem~\ref{thm1} there exists $\psi \in \mathtt{Si}(\mathbb{N}^{\ast }) $ and $f\in L_{\mathbb{R}}^{1}\left( \nu \right)$ such that for all $\phi \in \mathtt{Si}(\mathbb{N}^{\ast })$ and for every permutation $(\pi (1),\pi (2),...)$ of $ (1,2,...)$
\begin{equation*}
\lim_{n}S_{n} ( F_{p_{\psi \circ \phi \circ \pi (k) }} ( g'_{\varphi \circ \psi \circ \phi \circ \pi ( k )} 1_{A\cap C^{c}}) ) =f\text{ \ }\mu \text{-}a.e.
\end{equation*}
As
\begin{equation*}
\underset{k\geq 1}{\sum }\mu \left( g'_{\varphi \circ \psi \circ \phi \circ \pi \left( k\right) } 1_{A\cap C^{c}} \geq p_{\psi \circ \phi \circ \pi \left( k\right) }\right) <+\infty ,
\end{equation*}
by the Borel--Cantelli lemma we deduce that
\begin{equation*}
\lim_{n}S_{n}\left( g'_{\varphi \circ \psi \circ \phi \circ \pi \left(k\right) }\right) =f\text{ }\mu \text{-}a.e.\text{ on }A\cap C^{c}.
\end{equation*}%
\textbf{If} $b=0$, we take $C$ the empty set.\\
\textbf{Conclusion:} Put $g_{n}=g'_{\varphi \circ \psi (n)}$. Then for all $\phi \in \mathtt{Si}(\mathbb{N}^{\ast })$ and for every permutation $(\pi (1),\pi (2),...)$ of $(1,2,...)$ 
\begin{eqnarray*}
\lim_{n}S_{n}\left( \text{ }g_{\phi \circ \pi \left( k\right) }\right)  &=&f%
\text{ }\mu \text{-}a.e.\text{ on }A\cap C^{c} \\
\lim_{n}S_{n}\left( \text{ }g_{\phi \circ \pi \left( k\right) }\right) 
&=&+\infty \text{ }\mu \text{-}a.e.\text{ on }A^{c} \\
\lim_{n}S_{n}\left( \text{ }g_{\phi \circ \pi \left( k\right) }\right) 
&=&+\infty \text{ }\mu \text{-}a.e.\text{ on }A\cap C.
\end{eqnarray*}
Thus complete the proof.

\end{document}